\documentclass[12pt,a4paper, reqno]{amsart}
\usepackage{amsmath, amssymb, amsthm, eufrak, setspace, verbatim, xypic, enumitem}
\usepackage[top=2.5cm, bottom=2cm, left=2cm, right=2cm]{geometry}
\usepackage[all]{xy}
\usepackage[pagewise]{lineno}

\theoremstyle{definition}
\newtheorem{theorem}{Theorem}[section]

\newtheorem{proposition}[theorem]{Proposition}

\newtheorem{corollary}[theorem]{Corollary}
\newtheorem{example}[theorem]{Example}

\newcommand{\Z}{\mathbb{Z}}
\renewcommand{\bar}{\overline}
\renewcommand{\hat}{\widehat}

\DeclareMathOperator{\Perm}{\mbox{Perm}}
\DeclareMathOperator{\Hol}{\mbox{Hol}}
\DeclareMathOperator{\Aut}{\mbox{Aut}}

\DeclareMathOperator{\Norm}{\mbox{Norm}}

\begin{document}

\title[Sylow's theorem, Cauchy's theorem, Hall's theorem for skew braces]{Analogues of Sylow's first theorem, Cauchy's theorem, and Hall's theorem for skew braces}

\author{Paul J. Truman}
\address{School of Computer Science and Mathematics \\ Keele University \\ Staffordshire \\ ST5 5BG \\ UK}
\email{P.J.Truman@Keele.ac.uk}
\date{\today}

\subjclass[2020]{Primary 20N99}

\keywords{Skew brace, Sylow theorem, Cauchy's theorem, Hall's theorem, holomorph of a group}

\maketitle

\begin{abstract}
We establish an unconditional analogue of Sylow's first theorem for finite skew braces, and deduce an analogue of Cauchy's theorem. We also prove an analogue of the existence part of Hall's theorem for finite skew braces with soluble additive and multiplicative groups. We make some observations regarding the number of Sylow subskew braces of a skew brace in various cases. By applying these results we streamline the classification of skew braces of order $ pq $, where $ p,q $ are distinct prime numbers. 
\end{abstract}

\section{Introduction}

A \textit{skew brace} is a triple $ (G,\cdot,\circ) $ in which $ (G,\cdot) $ and $ (G,\circ) $ are groups and the operations are connected via ``twisted" distributivity relation

\begin{equation} \label{eqn_skew_brace}
x \circ (y \cdot z) = (x \circ y) \cdot x^{-1} \cdot (x \circ z) \mbox{ for all } x,y,z \in G,
\end{equation}

\noindent Where $ x^{-1} $ denotes the inverse of $ x $ with respect to $ \cdot $. It follows quickly from \eqref{eqn_skew_brace} that the identity elements with respect to $ \cdot $ and $ \circ $ coincide, but the inverse of an element $ x $ with respect to $ \circ $ (denoted $ \bar{x} $) need not coincide with $ x^{-1} $. We suppress the notation $ \cdot $ wherever possible. We will only study skew brace whose underlying set is finite. 

Skew braces were introduced by Guarnieri and Vendramin \cite{GV17}, generalizing Rump's notion of \textit{braces} \cite{Ru07a}. These objects were originally introduced to study bijective nondegenerate solutions of the set-theroretic Yang-Baxter equation, and have been found to have connections with a wide range of other structures and topics; there is therefore great interest in their structure and properties. 

Many concepts from the theory of groups or rings, such as solubility and nilpotency, have skew brace counterparts. In the case of finite skew braces it is certainly natural to study analogues of fruitful approaches from the theory of finite groups. For example: what we may infer from the order of a skew brace about the existence of certain substructures? A subset $ H $ of a skew brace $ (G,\cdot,\circ) $ is called a \textit{subskew brace} if it is a subgroup with respect to both operations. If $ G $ is finite then it is immediate from Lagrange's theorem that the order of a subskew brace divides $ |G| $; we seek conditions on a divisor of $ |G| $ that imply that a subskew brace of that order is guaranteed to exist. Caranti, Del Corso, Ferrara, Matteo, and Trombetti \cite{CCMFT25} study analogues of the first Sylow theorem and Hall's theorem for skew braces. Amongst numerous results they show that if $ p^{e} $ is the maximal power of a prime $ p $ that divides $ |G| $ then $ (G,\cdot,\circ) $ contains a subskew brace of order $ p^{e} $ provided $ (G,\cdot,\circ) $ satisfies any one of a range of technical hypotheses, such as being \textit{biskew}  \cite{Ch19b}, \textit{$\gamma$-homomorphic} \cite{BNY22}, \textit{left-nilpotent} \cite{CSV19}, or \textit{two-sided} \cite{Tra23}. From this they deduce a version of Cauchy's theorem for skew braces. This question is also tackled by Damele and P\'{e}rez Calabuig \cite{DPC26}: they prove that if $ (G,\cdot,\circ) $ is a finite skew brace which is \textit{biskew} or \textit{two-sided} and $ p $ is a prime number dividing the order of $ G $ then $ G $ contains a subskew brace of order $ p $. 

Our aim in this paper is to give unconditional proofs of these results. 
\\ \\
\textbf{Acknowledgements:} I am extremely grateful to Andrea Caranti and Ilaria Del Corso for pointing out how the arguments used to prove Theorem \ref{thm_Sylow} could be adapted to prove Theorem \ref{thm_Hall}, and to Marco Damele for suggesting Corollary \ref{cor_p_r}. I am also grateful to Ilaria Colazzo and Alan Koch for valuable conversations and feedback.

\section{Holomorphs and $ \gamma $-functions, and main results} \label{sec_main}

If $ G $ is a set and $ \cdot,\circ $ are two binary operations on $ G $, each giving a group structure on $ G $ and with a common identity element, then we may consider the two left regular representations $ \lambda_{\cdot}, \lambda_{\circ} : G \rightarrow \Perm(G) $. The triple $ (G,\cdot,\circ) $ forms a skew brace if and only if $ \lambda_{\circ}(G) $ is contained in the normaliser of $ \lambda_{\cdot}(G) $ in $ \Perm(G) $ \cite{GV17}. This subgroup is called the \textit{(permutational) holomorph} of $ (G,\cdot) $, and is equal to the semidirect product of $ \lambda_{\cdot}(G) $ and $ \Aut(G,\cdot) $. We will often identify it with the external semidirect product $ \Hol(G,\cdot) = (G,\cdot) \rtimes \Aut(G,\cdot) $ (the \textit{abstract} holomorph of $ (G,\cdot) $), which acts on $ G $ from the left:
\begin{equation} \label{eqn_hol_action}
(x,\alpha)[y] = x\alpha(y) \mbox{ for all } (x,\alpha) \in \Hol(G,\cdot) \mbox{ and } y \in G. 
\end{equation}

If $ (G,\cdot,\circ) $ is a skew brace, so that $ \lambda_{\circ}(G) \subseteq \Hol(G,\cdot) $, then for each $ x \in G $ we have $ \lambda_{\circ}(x) = (\mu_{x}, \gamma_{x}) $ for some $ \mu_{x} \in G $ and $ \gamma_{x} \in \Aut(G,\cdot) $. Evaluating at the identity element of $ G $ shows quickly that $ \mu_{x} = x $, and projecting onto the automorphism component yields a homomorphism $ \gamma : (G,\circ) \rightarrow \Aut(G,\cdot) $ which we call the \textit{$ \gamma $-function} of the skew brace. This function is often denoted $ \lambda $ in the literature; we reserve this symbol for left regular representations. 

The $ \gamma $-function of a skew brace $ (G,\cdot,\circ) $ translates between the binary operations: we have

\begin{equation} \label{eqn_gamma_translates}
x \circ y = x \gamma_{x}(y) \mbox{ for all } x,y \in G. 
\end{equation}

It can also be used to characterise various classes of substructures. A subset $ H $ of $ G $ is a subskew brace if and only if it is a subgroup with respect to one of the operations with the additional property that $ \gamma_{x}(y) \in H $ for all $ x,y \in H $. A subskew brace $ H $ is called a \textit{left ideal} if it satisfies the stronger property $ \gamma_{x}(y) \in H $ for all $ x \in G $ and $ y \in H $. Finally, a left ideal $ H $ is called an  \textit{ideal} if it is normal with respect to both operations; ideals are the kernels of skew brace homomorphisms and the substructures that permit the formation of quotients.

With these definitions to hand we state and prove our first result.  

\begin{theorem} \label{thm_Sylow}
Let $ G=(G,\cdot,\circ) $ be a finite skew brace, let $ p $ be a prime number, and write $ |G|=p^{e}m $ with $ p \nmid m $. Then $ G $ contains a subskew brace of order $ p^{e} $. 
\end{theorem}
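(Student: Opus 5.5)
The plan is to work inside the holomorph and use ordinary Sylow theory there. Let $ A = \lambda_{\cdot}(G) $ and $ B = \lambda_{\circ}(G) $. Both are regular subgroups of $ \Hol(G,\cdot) $, and $ A $ is normal in $ \Hol(G,\cdot) $. Hence $ \Gamma = AB $ is a finite subgroup of $ \Hol(G,\cdot) $ in which $ A $ is normal. Fix a Sylow $ p $-subgroup $ Q $ of $ B $ and, by Sylow's theorem in $ \Gamma $, a Sylow $ p $-subgroup $ S $ of $ \Gamma $ with $ Q \subseteq S $. Since $ A \trianglelefteq \Gamma $, the intersection $ S \cap A $ is a Sylow $ p $-subgroup of $ A $. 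It has the form $ \lambda_{\cdot}(P) $ for a Sylow $ p $-subgroup $ P $ of $ (G,\cdot) $, and $ S $ normalises it.

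First step: find a $ P $-coset fixed by $ S $. The orbits of $ \lambda_{\cdot}(P) $ on $ G $ are the right cosets $ Px $, each of size $ p^{e} $, and there are $ m $ of them. Because $ S $ normalises $ \lambda_{\cdot}(P) $, it permutes these orbits. As $ S $ is a $ p $-group and $ p \nmid m $, it fixes at least one of them, say $ O = Px $.

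Second step: show $ O $ is also a $ \circ $-coset. Since $ Q \subseteq S $, the group $ Q $ stabilises $ O $. Because $ B $ is regular, $ Q $ acts semiregularly, so its orbits have size $ |Q| = p^{e} $. Write $ Q = \lambda_{\circ}(H) $, where $ H $ is a Sylow $ p $-subgroup of $ (G,\circ) $. The $ Q $-orbit of any $ y \in O $ is then $ H \circ y $, and it must be all of $ O $. So $ O = Py = H \circ y $ for every $ y \in O $: it is simultaneously a right $ \cdot $-coset of a Sylow subgroup of $ (G,\cdot) $ and a right $ \circ $-coset of a Sylow subgroup of $ (G,\circ) $.

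Third step: translate $ O $ so that it passes through the identity. Fix $ y \in O $ and apply $ \lambda_{\circ}(\bar{y}) $. On the $ \circ $-side, $ \bar{y} \circ O = \bar{y} \circ H \circ y $, which is a subgroup of $ (G,\circ) $. On the $ \cdot $-side, use \eqref{eqn_gamma_translates} with $ g = \bar{y} $:
\[ g \circ (Py) = g\gamma_{g}(P)\gamma_{g}(y) = \bigl(g\gamma_{g}(P)g^{-1}\bigr)(g \circ y) = g\gamma_{g}(P)g^{-1}, \]
since $ g \circ y = 1 $. This is a subgroup of $ (G,\cdot) $ because $ \gamma_{g} $ is an automorphism. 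Hence $ \bar{y} \circ O $ is a subgroup under both operations, i.e.\ a subskew brace, and it has order $ p^{e} $.

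The main obstacle I expect is the choice of $ S $ and the fixed-coset step. The point is that one Sylow subgroup of $ \Gamma $ must simultaneously contain a full Sylow subgroup of $ B $ and normalise a Sylow subgroup of $ A $; this is exactly what normality of $ A $ in $ \Gamma $ provides. The counting argument with $ p \nmid m $ then produces the common coset. The remaining verifications are routine.
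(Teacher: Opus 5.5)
Your proof is correct. It shares the paper's overall architecture: work in $\Gamma=\lambda_{\cdot}(G)\lambda_{\circ}(G)=(G,\cdot)\rtimes\gamma(G)$, put $\lambda_{\circ}(Q)$ inside a Sylow $p$-subgroup of $\Gamma$, extract a set that is a coset of a Sylow subgroup for both operations, and translate it to the identity. The mechanics differ, however.

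The paper first lets a Sylow $p$-subgroup of $(G,\circ)$ act via $\gamma$ on the set $X$ of Sylow $p$-subgroups of $(G,\cdot)$. It uses $|X|\equiv 1 \pmod{p}$ to find a $P$ normalised by $\gamma(Q)$. It then forms the explicit Sylow subgroup $(P,\cdot)\rtimes\gamma(Q)$ of $\Gamma$, which stabilises the subset $P$. Conjugacy of Sylow subgroups places $\lambda_{\circ}(Q)$ in a conjugate of it, and evaluating at the identity gives $Q\circ g\subseteq g\gamma_{h}(P)$. Equality after translation follows from a cardinality count.

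You instead take an arbitrary Sylow $S\supseteq\lambda_{\circ}(Q)$ and read off $P$ from $S\cap\lambda_{\cdot}(G)$. So you need no preliminary $\gamma$-action and no appeal to $|X|\equiv 1 \pmod{p}$. You find the common coset by a fixed-point count on the $m$ right cosets of $P$, and get equality from semiregularity of $\lambda_{\circ}(Q)$.

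Your route is slightly more intrinsic and makes the ``common block'' picture explicit. The paper's route rests on conjugacy rather than on a $p$-group fixing a point of a set of size prime to $p$, and that is what transfers to Theorem \ref{thm_Hall}. For $|\pi|\geq 2$, a $\pi$-group acting on a set of $\pi'$-size need not fix a point; for example, the symmetric group of degree $3$ can act on $5$ points with orbits of sizes $2$ and $3$. So your counting step would have to be replaced by a conjugacy argument there. Note, though, that your $S\cap\lambda_{\cdot}(G)$ device is exactly how the paper produces $P$ in that Hall proof.
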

\begin{proof}
Let $ X $ denote the set of Sylow $ p $-subgroups of $ (G,\cdot) $. Since $ \gamma_{x} \in \Aut(G,\cdot) $ for each $ x \in G $, and $ \gamma : (G,\circ) \rightarrow \Aut(G,\cdot) $ is a homomorphism, the group $ (G,\circ) $ acts on $ X $ via $ \gamma $. 
\\ \\
Now let $ (Q,\circ) $ be a Sylow $ p $-subgroup of $ (G,\circ) $; then $ (Q,\circ) $ also acts on $ X $ via $ \gamma $. Since $ (Q,\circ) $ is a $ p $-group, and $ |X| \equiv 1 \pmod{p} $, there must be an orbit of length $ 1 $, say $ \{ P \} $. Thus there exists a Sylow $ p $-subgroup $ (P,\cdot) $ of $ (G,\cdot) $ such that $ \gamma_{x}(P)=P $ for all $ x \in Q $. 
\\ \\
Now consider the subgroup $ (G,\cdot) \rtimes \gamma(G) $ of $ \Hol(G,\cdot) $. Since $ \gamma_{x}(P)=P $ for all $ x \in Q $, we can construct a subgroup $ (P,\cdot) \rtimes \gamma(Q) $ of $ (G,\cdot) \rtimes \gamma(G) $. Since $ (Q,\circ) $ is a Sylow $ p $-subgroup of  $ (G,\circ) $ its image $ \gamma(Q) $ is a Sylow $ p $-subgroup of $ \gamma(G) $, so $ (P,\cdot) \rtimes \gamma(Q) $ is a Sylow $ p $-subgroup of $ (G,\cdot) \rtimes \gamma(G) $. In particular, every Sylow $ p $-subgroup of $ (G,\cdot) \rtimes \gamma(G) $ is conjugate to $ (P,\cdot) \rtimes \gamma(Q) $.
\\ \\
Let $ \lambda_{\circ} : (G,\circ) \rightarrow \Hol(G,\cdot) $ be the left regular representation with respect to $ \circ $. Then $ \lambda_{\circ}(G) \subseteq (G,\cdot) \rtimes \gamma(G) $. Consider $ \lambda_{\circ}(Q) $; this is a $ p $-subgroup of $ (G,\cdot) \rtimes \gamma(G) $, so it is contained in a Sylow $ p $-subgroup of $ (G,\cdot) \rtimes \gamma(G) $. Hence there exist $ g,h \in G $ such that 
\[ \lambda_{\circ}(Q) \subseteq (g,\gamma_{h})\left( (P,\cdot) \rtimes \gamma(Q) \right) (g,\gamma_{h})^{-1}, \]
and so
\[ \lambda_{\circ}(Q) (g,\gamma_{h}) \subseteq (g,\gamma_{h})\left( (P,\cdot) \rtimes \gamma(Q) \right). \] 
Evaluating both sides at the identity element of $ G $ we have 
\[ Q \circ g \subseteq g \gamma_{h}(P), \]
and so 
\begin{align*}
\bar{g} \circ Q \circ g & \subseteq \bar{g} \circ (g\gamma_{h}(P)) \\
& = (\bar{g} \circ g) \bar{g}^{-1} (\bar{g} \circ \gamma_{h}(P)) \\
& = \bar{g}^{-1} (\bar{g} \circ \gamma_{h}(P)) \\
& = \gamma_{\bar{g}\circ h}(P).
\end{align*}
But $ \bar{g} \circ Q \circ g $ is a Sylow $ p $-subgroup of $ (G,\circ) $, and $ \gamma_{\bar{g} \circ h}(P) $ is a Sylow $ p $-subgroup of $ (G,\cdot) $ (since $ \gamma_{\bar{g} \circ h} \in \Aut(G,\cdot) $). Hence we have
\[ \bar{g} \circ Q \circ g  = \gamma_{\bar{g} \circ h}(P), \]
and this set is a Sylow $ p $-subgroup with respect to both operations simultaneously. That is: a subskew brace of order $ p^{e} $. 
\end{proof}

The following corollary makes use of the $ \ast $ operation on a skew brace $ S $. We summarise the relevant properties, as described in \cite[Section 2]{CSV19}. For $ x,y \in S $ we define $ x \ast y = \gamma_{x}(y)y^{-1} $, and for $ X,Y \subseteq S $ we define $ X \ast Y $ to be the additive subgroup generated by $ \{ x \ast y \mid x \in X, \; y \in Y \} $. In particular, we define a chain of subskew braces (in fact, left ideals) $ S^{n} $ by $ S^{1}=S $ and $ S^{n+1} = S \ast S^{n} $ for $ n \geq 1 $. In particular, $ S^{2} $ is an ideal of $ S $ and $ S / S^{2} $ is trivial as a skew brace (that is: the two operations involved coincide). We say that $ S $ is \textit{left nilpotent} if $ S^{k} = \{ 1 \} $ for some $ k \in \mathbb{N} $. In particular, if $ S $ is left nilpotent then $ S^{2} $ is properly contained in $ S $. 

\begin{corollary} \label{cor_p_r}
With the notation above, $ G $ contains a subskew brace of each order $ p^{r} $ with $ 0 \leq r \leq e $. 
\end{corollary}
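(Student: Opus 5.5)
By Theorem \ref{thm_Sylow}, $G$ contains a subskew brace $S$ of order $p^{e}$, and every subskew brace of $S$ is a subskew brace of $G$. So it suffices to prove: \emph{a skew brace $S$ of prime power order $p^{e}$ contains a subskew brace of every order $p^{r}$ with $0 \leq r \leq e$.} I will prove this by induction on $e$, taking $e=0$ as the base case.

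The key input is that a skew brace of $p$-power order is left nilpotent. This is a known result (Ced\'o--Smoktunowicz--Vendramin \cite{CSV19}), and it can also be seen directly. The group $(S,\circ)$ is a $p$-group acting via $\gamma$ on the $p$-group $(S,\cdot)$, so the semidirect product $(S,\cdot)\rtimes \gamma(S)$ is a $p$-group. Hence there is a chain of $\gamma(S)$-invariant normal subgroups of $(S,\cdot)$ with every factor central and acted on trivially by $\gamma(S)$. This gives $S^{n}=\{1\}$ for large $n$. Consequently $S^{2}$ is properly contained in $S$, as recorded just before the statement.

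Now $S^{2}$ is an ideal and $S/S^{2}$ is a trivial skew brace, so its two operations coincide and it is simply a group of order $p^{k}$ with $k \geq 1$. That group has a subgroup $\bar{T}$ of order $p^{k-1}$, since $p$-groups have subgroups of every order dividing their order. In a trivial skew brace every subgroup is a subskew brace. Its preimage $T$ under the quotient homomorphism $S \to S/S^{2}$ is therefore a subskew brace of $S$, because preimages of subskew braces under skew brace homomorphisms are subskew braces. We have $|T| = |S^{2}|\,p^{k-1} = p^{e-1}$.

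Applying the induction hypothesis to $T$ gives subskew braces of every order $p^{r}$ with $r \leq e-1$, and $S$ itself supplies order $p^{e}$. The main obstacle is justifying left nilpotency, i.e.\ that $S^{2} \neq S$. I would cite \cite{CSV19} for it, or run the semidirect-product argument sketched above. The rest of the argument is routine bookkeeping about quotients and preimages.
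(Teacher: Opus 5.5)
Your proof is correct and follows the paper's argument. Left nilpotency of the order-$p^{e}$ subskew brace $S$ gives $S^{2}\neq S$, an index-$p$ subgroup of the trivial skew brace $S/S^{2}$ lifts to an index-$p$ subskew brace of $S$, and iterating gives every order $p^{r}$; you make explicit the induction the paper leaves implicit. Your optional self-contained proof of left nilpotency is also sound: $x\ast y=\gamma_{x}(y)y^{-1}$ is a commutator in the $p$-group $(S,\cdot)\rtimes\gamma(S)$, so the $S^{n}$ lie in iterated commutator subgroups of that nilpotent group and eventually vanish.
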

\begin{proof}
Let $ S $ be a subskew brace of $ G $ of order $ p^{e} $. By \cite[Proposition 4.4]{CSV19} $ S $ is left nilpotent, and so $ S^{2} $ is a proper ideal of $ G $. Now $ S / S^{2} $ is a trivial skew brace of $ p $-power order greater than $ 1 $. That is: a $ p $-group of order greater than $ 1 $. Hence $ S / S^{2} $ contains a subgroup of index $ p $; viewing $ S / S^{2} $ as a trivial skew brace, this subgroup is an ideal, which therefore lifts to an ideal of $ S $ of index $ p $. Hence we obtain a chain of subskew braces of $ G $
\[ S = S_{e} \supset S_{e-1} \supset \cdots \supset S_{1} \supset S_{0} = \{ 1 \} \]
in which each $ S_{i-1} $ has index $ p $ in $ S_{i} $. Therefore $ G $ contains a subskew brace of each order $ p^{r} $ with $ 0 \leq r \leq e $. 
\end{proof}

In particular, choosing $ r=1 $ in Corollary \ref{cor_p_r} we obtain a skew brace analogue of Cauchy's theorem. 

\begin{corollary} \label{cor_Cauchy}
Let $ (G,\cdot,\circ) $ be a finite skew brace and let $ p $ be a prime number dividing $ |G| $. Then $ G $ contains a subskew brace of order $ p $. 
\end{corollary}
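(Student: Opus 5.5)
This is immediate from Corollary \ref{cor_p_r}. Write $|G| = p^{e}m$ with $p \nmid m$. Since $p$ divides $|G|$, we have $e \geq 1$, so $r = 1$ lies in the range $0 \leq r \leq e$. Corollary \ref{cor_p_r} then gives a subskew brace of $G$ of order $p^{1} = p$.

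It is worth recording what that corollary actually uses, to confirm nothing extra is needed here. Theorem \ref{thm_Sylow} supplies a subskew brace $S$ of order $p^{e}$, with no hypotheses on $G$. Such an $S$ is left nilpotent by \cite[Proposition 4.4]{CSV19}, so $S^{2}$ is a proper ideal of $S$. The quotient $S/S^{2}$ is a trivial skew brace, that is, a nontrivial $p$-group. Every subgroup of index $p$ in it is an ideal, and pulling back along the quotient map gives an ideal of $S$ of index $p$. Iterating this produces a chain
\[ S = S_{e} \supset S_{e-1} \supset \cdots \supset S_{1} \supset S_{0} = \{1\} \]
of subskew braces, each of index $p$ in the next. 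In particular $S_{1}$ has order $p$.

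If I wanted to bypass the nilpotency input, I would instead start from the Sylow subskew brace $S$ and try to find an element $x \in S$ with both of the following properties:
\begin{enumerate}[label=(\roman*)]
\item $\langle x \rangle_{\cdot}$ has order $p$;
\item $\gamma_{y}(x) \in \langle x \rangle_{\cdot}$ for all $y \in \langle x \rangle_{\cdot}$.
\end{enumerate}
The natural place to look is fixed points of the $p$-group $\gamma(S)$ acting on the elements of order $p$ in the centre of $(S,\cdot)$. The number of such elements is $\equiv -1 \pmod p$, so a fixed point must exist. A fixed point $x$ spans a subgroup $\langle x\rangle_{\cdot}$ which is $\gamma(S)$-stable, hence a subskew brace of order $p$.

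This alternative is the only step needing real care, and the citation to \cite{CSV19} makes it unnecessary. So the main argument is just the specialisation $r=1$ of Corollary \ref{cor_p_r}.
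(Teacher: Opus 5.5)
Your argument is correct and is exactly the paper's: the corollary is obtained by taking $r=1$ in Corollary \ref{cor_p_r}, which rests on Theorem \ref{thm_Sylow} together with the left nilpotency of skew braces of prime-power order from \cite{CSV19}. Your sketched alternative is also sound: $\gamma(S)$ is a $p$-group acting on the $p^{k}-1$ elements of order $p$ in the centre of $(S,\cdot)$, so it has a fixed point, and that fixed point generates a trivial subskew brace of order $p$. This route lets you avoid the citation to \cite{CSV19} for this particular corollary.
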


It is natural to ask for skew brace analogues of the remaining Sylow theorems. For example, do the Sylow subskew braces of a finite skew brace $ G $ form a single orbit under a suitable action of some group closely connected with $ G $?  Very small examples already illustrate that for a fixed prime $ p $ dividing $ |G| $ the set of Sylow $ p $-subskew braces of $ G $ is not stable under the obvious actions of $ (G,\cdot) $ or $ (G,\circ) $ via conjugation, or by the natural action of $ (G,\circ) $ via $ \gamma $. 

\begin{example} \label{eg_Z_6_counterexamples}
Define a binary operation $ \circ $ on $ \Z_{6} $ by $ i \circ j = i+(-1)^{i}j $. Then $ (\Z_{6},\circ) \cong D_{3} $ and $ (\Z_{6},+,\circ) $ is a biskew brace (that is: $ (\Z_{6},\circ,+) $ is also a skew brace). 

Obviously $ (\Z_{6},+) $ has a unique Sylow $ 2 $-subgroup (generated by $ 3 $), whereas $ (\Z_{6},\circ) $ has three (generated by $ 1,3 $, and $ 5 $). Hence $ (\Z_{6},+,\circ) $ has a unique Sylow $ 2 $-subskew brace (which is a left ideal), but this is not stable under conjugation by the multiplicative group $ (\Z_{6},\circ) $. 

Reversing the roles of the operations we see that $ (\Z_{6},\circ,+) $ has a unique Sylow $ 2 $-subskew brace, but this is not stable under conjugation by the additive group $ (\Z_{6},\circ) $. Moreover, the $ \gamma $-function of $ (\Z_{6},\circ,+) $ is given by $ \gamma_{i}(j) = (-1)^{i}j $, so unique Sylow $ 2 $-subskew brace is not stable under the action of $ (\Z_{6},\circ) $ via $ \gamma $. 
\end{example}

Without a description of the Sylow $ p $-subskew braces of a finite skew brace $ G $ as a single orbit under some suitable group action, we obviously cannot hope to imitate classical results to derive information about their number. We record some observations for skew braces satisfying various additional hypotheses.

Certainly if $ (G,\cdot) $ has a unique Sylow $ p $-subgroup (in particular, if it is nilpotent) then $ G $ has a unique Sylow $ p $-subskew brace, which is a left ideal. Similarly, if $ (G,\circ) $ has a unique Sylow $ p $-subgroup then $ G $ has a unique Sylow subskew brace, although this is not necessarily an ideal, even if $ (G,\circ) $ is nilpotent: consider the skew brace $ (\Z_{6},\circ,+) $ in Example \ref{eg_Z_6_counterexamples}.

On the other hand, if $ G $ is a two-sided skew brace then we observe the following behaviour at the other extreme:

\begin{proposition}
Suppose that $ G $ is a finite two-sided skew brace, and let $ p $ be a prime number. Then every Sylow $ p $-subgroup of $ (G,\circ) $ is a subskew brace of $ G $. Consequently, the Sylow $ p $-subskew braces of $ G $ are mutually conjugate in $ (G,\circ) $, and the number of these subskew braces divides $ |G| $ and is congruent to $ 1 $ modulo $ p $. 
\end{proposition}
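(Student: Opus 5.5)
The plan is to combine Theorem \ref{thm_Sylow} with the classical Sylow theorems for $ (G,\circ) $. The extra ingredient from two-sidedness is that conjugation in $ (G,\circ) $ is a skew brace automorphism. Recall that $ G $ is two-sided if, in addition to \eqref{eqn_skew_brace}, the right-hand analogue
\[ (y \cdot z) \circ x = (y \circ x) \cdot x^{-1} \cdot (z \circ x) \mbox{ for all } x,y,z \in G \]
holds.

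\textbf{Step 1: conjugation by $ g $ in $ (G,\circ) $ is additive.} For $ g \in G $ let $ c_{g}(y) = g \circ y \circ \bar{g} $. Clearly $ c_{g} $ is an automorphism of $ (G,\circ) $, so it remains to show $ c_{g}(yz) = c_{g}(y)c_{g}(z) $.

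First use left distributivity:
\[ g \circ (yz) = (g\circ y) g^{-1} (g \circ z). \]
Then apply right distributivity (with $ x = \bar{g} $) twice to the three-term product. This gives
\[ c_{g}(yz) = c_{g}(y)\, \bar{g}^{-1} (g^{-1}\circ \bar{g})\, \bar{g}^{-1}\, c_{g}(z). \]

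So everything reduces to the identity $ g^{-1} \circ \bar{g} = \bar{g}\,\bar{g} $. This follows by applying right distributivity to $ 1 = g^{-1} g $ with $ x = \bar{g} $:
\[ \bar{g} = 1 \circ \bar{g} = (g^{-1}\circ\bar{g})\,\bar{g}^{-1}\,(g\circ\bar{g}) = (g^{-1}\circ\bar{g})\,\bar{g}^{-1}. \]
Hence $ g^{-1}\circ\bar{g} = \bar{g}\,\bar{g} $, and the middle factor $ \bar{g}^{-1}(g^{-1}\circ\bar{g})\bar{g}^{-1} $ collapses to $ 1 $. Thus each $ c_{g} $ is an automorphism of the skew brace $ G $. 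This computation is the only real obstacle, and it is short once the identity for $ g^{-1}\circ\bar g $ is spotted.

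\textbf{Step 2: every Sylow $ p $-subgroup of $ (G,\circ) $ is a subskew brace.} By Theorem \ref{thm_Sylow} there is a subskew brace $ S $ of order $ p^{e} $. In particular $ (S,\circ) $ is a Sylow $ p $-subgroup of $ (G,\circ) $.

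By Sylow's theorem in $ (G,\circ) $, every Sylow $ p $-subgroup has the form $ g\circ S\circ\bar{g} = c_{g}(S) $ for some $ g \in G $. Since $ c_{g} $ is a skew brace automorphism, $ c_{g}(S) $ is again a subskew brace.

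\textbf{Step 3: the consequences.} Conversely, any subskew brace of order $ p^{e} $ is a Sylow $ p $-subgroup of $ (G,\circ) $. So the Sylow $ p $-subskew braces are exactly the Sylow $ p $-subgroups of $ (G,\circ) $.

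These are mutually conjugate in $ (G,\circ) $. Their number equals the index of a normaliser in $ (G,\circ) $, so it divides $ |G| $. By the classical Sylow theorems this number is also congruent to $ 1 $ modulo $ p $.
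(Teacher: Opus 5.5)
Your proposal is correct and follows the paper's route: take a Sylow $p$-subskew brace from Theorem \ref{thm_Sylow}, observe that its $\circ$-conjugates are all the Sylow $p$-subgroups of $(G,\circ)$, and use that $\circ$-conjugation is a skew brace automorphism when $G$ is two-sided. The one difference is that the paper cites this last fact from Nasybullov and Trappeniers, whereas you verify it directly through the identity $g^{-1}\circ\bar{g}=\bar{g}\,\bar{g}$, and that computation checks out.
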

\begin{proof}
Let $ P $ be a Sylow $ p $-subskew brace of $ G $. Then the set of Sylow $ p $-subgroups of $ (G,\circ) $ is equal to the set of conjugates of $ (P,\circ) $ in $ (G,\circ) $. But since $ G $ is two-sided every inner automorphism of $ (G,\circ) $ is a skew brace automorphism of $ G $ (\cite[Lemma 4.1]{Na19},  \cite[Proposition 2.3]{Tra23}), so for each $ g \in G $ the set $ g \circ P \circ \bar{g} $ is closed with respect to $ \cdot $, so is a subskew brace of $ G $. The other claims follow immediately. 
\end{proof}

Next we study the case in which $ G=(G,\cdot,\circ) $ is a biskew brace; in this case we can derive information about the number of Sylow $p$-subskew braces by studying the set of Sylow $ p $-subgroups of $ (G,\cdot) $.

\begin{proposition} \label{prop_counting_biskew}
Suppose that $ G $ is a finite biskew brace, and let $ p $ be a prime number. Then the number of Sylow $ p $-subskew braces of $ G $ is congruent to $ 1 $ modulo $ p $. 
\end{proposition}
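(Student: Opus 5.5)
The plan is to let a $p$-group act on the set $Y$ of Sylow $p$-subskew braces of $G$ and count fixed points modulo $p$, reusing the set-up of the proof of Theorem \ref{thm_Sylow}. The step I cannot yet justify is the identification of the fixed points in the third paragraph. I also rely on a characterisation of biskew braces that I recall but have not checked against \cite{Ch19b}.

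First, I need the action to preserve $Y$. I would use the standard characterisation of biskew braces: $(G,\cdot,\circ)$ is biskew if and only if each $\gamma_x$ is also an automorphism of $(G,\circ)$. I would verify this from \eqref{eqn_gamma_translates} and \eqref{eqn_skew_brace} applied to the skew brace $(G,\circ,\cdot)$, or quote it from \cite{Ch19b}. Granting it, each $\gamma_x$ is an automorphism of both group structures. So $\gamma_x$ sends a subset that is a Sylow $p$-subgroup for both $\cdot$ and $\circ$ to another such subset. Hence $(G,\circ)$ acts on $Y$ via $\gamma$, and by Theorem \ref{thm_Sylow} we have $Y \neq \emptyset$.

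Next, fix a Sylow $p$-subgroup $(Q,\circ)$ of $(G,\circ)$ and restrict the action to $Q$. Since $Q$ is a $p$-group, $|Y| \equiv |Y^{Q}| \pmod p$, where $Y^{Q}$ is the set of $H \in Y$ with $\gamma_x(H)=H$ for all $x\in Q$. It therefore suffices to show $|Y^{Q}| \equiv 1 \pmod p$. Let $X$ be the set of Sylow $p$-subgroups of $(G,\cdot)$, and let $Z=X^{Q}$ be the set of those fixed by $\gamma(Q)$. As in the proof of Theorem \ref{thm_Sylow}, $|Z|\equiv|X|\equiv 1 \pmod p$. There is an obvious injection $Y^{Q}\hookrightarrow Z$, sending $H$ to $(H,\cdot)$. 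The goal is to show this injection is a bijection, or at least that $|Z\setminus Y^{Q}|\equiv 0 \pmod p$.

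This is the main obstacle. A $P \in Z$ is closed under $\circ$ exactly when $\gamma_x(P)=P$ for all $x \in P$, but membership of $Z$ only gives invariance under $\gamma(Q)$. To bridge the gap I would rerun the holomorph argument of Theorem \ref{thm_Sylow}. For $P \in Z$, the group $(P,\cdot)\rtimes\gamma(Q)$ is a Sylow $p$-subgroup of $(G,\cdot)\rtimes\gamma(G)$, and $\lambda_\circ(Q)$ lies in a conjugate of it. That argument produces $g \in G$ with $\bar g\circ Q\circ g$ equal to a $\gamma$-translate of $P$.

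In the biskew case I would exploit symmetry: run the same argument for the skew brace $(G,\circ,\cdot)$ with the roles of $P$ and $Q$ swapped. The aim is to show that the elements of $Z$ not lying in $Y^{Q}$ fall into orbits of size divisible by $p$ under a second $p$-group action, for instance the action of $(P_0,\cdot)$ for a fixed $P_0\in Y$ via the $\gamma$-function of $(G,\circ,\cdot)$. If that works, then $|Y^{Q}|\equiv|Z|\equiv 1 \pmod p$, and hence $|Y| \equiv 1 \pmod p$.

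If the bijection fails, my fallback is to count $Y$ directly as the fixed points of an involution-free symmetric double action. I would act with $(P_0,\cdot)$ on the Sylow $p$-subgroups of $(G,\circ)$ via the $\gamma$-function of $(G,\circ,\cdot)$, and with $(Q_0,\circ)$ on $X$ via $\gamma$, and compare the two fixed-point sets.
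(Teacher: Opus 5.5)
Your set-up is sound. The characterisation you recall is correct: given $\gamma_{x\circ y}=\gamma_x\gamma_y$, the identity $\gamma_x(y\circ z)=\gamma_x(y)\circ\gamma_x(z)$ for all $x,y,z$ is equivalent to $\gamma_{xw}=\gamma_w\gamma_x$ for all $x,w$. That is exactly the antihomomorphism criterion for biskew braces that the paper quotes. So $(G,\circ)$ acts on $Y$ via $\gamma$, $|Y|\equiv|Y^{Q}| \pmod p$, $|Z|\equiv 1 \pmod p$, and $Y^{Q}\subseteq Z$.

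However, the proof stops exactly where the content lies. You never establish $Z\subseteq Y^{Q}$, or even $|Z\setminus Y^{Q}|\equiv 0 \pmod p$. Your proposed remedy, a ``second $p$-group action'' through the $\gamma$-function of $(G,\circ,\cdot)$, or the fallback ``double action'', is not an argument. You do not say on which set the group acts, nor why the elements of $Z\setminus Y^{Q}$ should fall into orbits of length divisible by $p$. As it stands this is a genuine gap.

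The missing step is one line, and you already have both ingredients. Take $P\in Z$. The holomorph argument of Theorem \ref{thm_Sylow} uses only that $\gamma(Q)$ fixes $P$. It yields $g,h\in G$ with $\bar{g}\circ Q\circ g=\gamma_{\bar{g}\circ h}(P)$. This set is a Sylow $p$-subgroup of both $(G,\circ)$ and $(G,\cdot)$, so it lies in $Y$. By your second paragraph, $Y$ is stable under every $\gamma_y$, and $\gamma_y^{-1}=\gamma_{\bar{y}}$. Hence $P=\gamma_{\bar{g}\circ h}^{-1}(\bar{g}\circ Q\circ g)\in Y$, and since $P$ is $\gamma(Q)$-fixed, $P\in Y^{Q}$. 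So $Y^{Q}=Z$ and $|Y|\equiv|Y^{Q}|=|Z|\equiv|X|\equiv 1 \pmod p$. No symmetry argument is needed.

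For comparison, the paper lets a Sylow $p$-subgroup $P$ of $(G,\cdot)$ act on $X$ via $\gamma$ and identifies $Y$ with the fixed points, using $\gamma_{gxg^{-1}}=\gamma_x$. The antihomomorphism property only gives $\gamma_{gxg^{-1}}=\gamma_g^{-1}\gamma_x\gamma_g$. So that identity needs $\gamma_g$ and $\gamma_x$ to commute, which holds for instance for the abelian-map braces. It fails for $x\circ y=yx$ on $S_{3}$ with $p=2$: there all three Sylow $2$-subgroups are subskew braces, but only one is fixed by $P$. Your route, completed as above, avoids this. It needs only the argument of Theorem \ref{thm_Sylow} together with the $\gamma(G)$-stability of $Y$.
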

\begin{proof}
Let $ (P,\cdot) $ be a Sylow $ p $-subgroup of $ (G,\cdot) $, so that the set $ X $ of Sylow $ p $-subgroups of $ (G,\cdot) $ is equal to the set of conjugates of $ (P,\cdot) $ in $ (G,\cdot) $. For $ g \in G $, the subgroup $ gPg^{-1} $ of $ (G,\cdot) $ is a subskew brace if and only if $ \gamma_{gxg^{-1}}(gPg^{-1}) = gPg^{-1} $ for all $ x \in P $. But since $ G $ is biskew the function $ \gamma $ is an antihomomorphism from $ (G,\cdot) $ to $ \Aut(G,\cdot) $ (\cite[Theorem 3.1]{Ca20}, \cite[Theorem 2.6]{ST23b}). Hence we have $ \gamma_{gxg^{-1}} = \gamma_{x} $ for all $ x \in P $ and $ g \in G $, and so $ gPg^{-1} $ is a subskew brace if and only if $ \gamma_{x}(gPg^{-1}) = gPg^{-1} $ for all $ x \in P $. Therefore the Sylow $ p $-subskew braces of $ G $ correspond with the orbits of length $ 1 $ when $ P $ acts on $ X $ via $ \gamma $. Since the orbits under this action have $ p $-power length, and $ |X| \equiv 1 \pmod{p} $, the number of orbits of length $ 1 $ is congruent to $ 1 $ modulo $ p $; the result follows.  
\end{proof}

\begin{example}
A rich source of biskew braces is the theory of \textit{abelian maps}, due to Koch \cite{Koc21a}. Given a group $ (G,\cdot) $ and an endomorphism of $ (G,\cdot) $ with abelian image, the binary operation defined by $ g \circ h = g\psi(g)^{-1}h\psi(g) $ makes $ (G,\cdot,\circ) $ into a biskew brace whose $ \gamma $-function is given by $ \gamma_{g}(h) = \psi(g)^{-1}h\psi(g) $ for all $ g,h \in G $. 

Now suppose that $ G $ is finite and fix a Sylow $ p $-subgroup $ (P,\cdot) $ of $ (G,\cdot) $. By Proposition \ref{prop_counting_biskew}, a conjugate $ gPg^{-1} $ of $ P $ is a Sylow subskew brace of $ (G,\cdot,\circ) $ if and only if $ \psi(x)^{-1}gPg\psi(x) = gPg^{-1} $ for all $ x \in P $. That is, if and only if $ \psi(x) \in \Norm_{(G,\cdot)}(gPg^{-1},\cdot) $ for all $ x \in P $. Since such $ \psi(x) $ have $ p $-power order, this occurs if and only if $ \psi(x) \in gPg^{-1} $ for all $ x \in P $; that is, if and only if $ \psi(P) \subseteq gPg^{-1} $. 

We note that since $ (\psi(G),\cdot) $ is abelian it has a unique Sylow $ p $-subgroup, which is the image under $ \psi $ of every Sylow $ p $-subgroup of $ (G,\cdot) $; hence the condition derived above does not depend upon the choice of $ P $. 
\end{example}

To close this section we specialise to a finite skew brace $(G,\cdot,\circ) $ in which $ (G,\cdot) $ and $ (G,\circ) $ are soluble groups. In this case, a variant of the proof of Theorem \ref{thm_Sylow} yields the following analogue of Hall's theorem:

\begin{theorem} \label{thm_Hall}
Suppose that $ G=(G,\cdot,\circ) $ is a finite skew brace in which $ (G,\cdot) $ and $ (G,\circ) $ are soluble groups. Let $ \pi $ be a set of primes. Then $ G $ contains a subskew brace whose order is a product of primes in $ \pi $ and whose index is not divisible by any of the primes in $ \pi $. 
\end{theorem}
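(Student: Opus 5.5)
We adapt the proof of Theorem \ref{thm_Sylow}, replacing Sylow theory with Hall theory. Throughout, a \emph{Hall $\pi$-subgroup} is a subgroup whose order involves only primes in $\pi$ and whose index is coprime to every prime in $\pi$. We use the classical facts for a finite soluble group: Hall $\pi$-subgroups exist, they are mutually conjugate, and every $\pi$-subgroup lies in one of them.

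\textbf{Step 1: a compatible pair.} Let $(Q,\circ)$ be a Hall $\pi$-subgroup of $(G,\circ)$. The orbit-counting argument used for Sylow subgroups is no longer available, because $|X|$ has no useful congruence when $X$ is the set of Hall $\pi$-subgroups of $(G,\cdot)$. Instead we work in the semidirect product $K=(G,\cdot)\rtimes\gamma(Q)$, which is soluble: $(G,\cdot)$ is soluble, and $\gamma(Q)$ is a quotient of a subgroup of the soluble group $(G,\circ)$. Choose a Hall $\pi$-subgroup $H$ of $K$. The image of $H$ in $K/(G,\cdot)\cong\gamma(Q)$ is a Hall $\pi$-subgroup of the $\pi$-group $\gamma(Q)$, so it is all of $\gamma(Q)$. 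Also $P:=H\cap (G,\cdot)$ is a Hall $\pi$-subgroup of $(G,\cdot)$, since $(G,\cdot)$ is normal in $K$. Because $H$ normalises $P$ and projects onto $\gamma(Q)$, for each $x\in Q$ there is some $g$ with $(g,\gamma_x)\in H$, and then $g\gamma_x(P)g^{-1}=P$. This gives $\gamma_x(P)=g^{-1}Pg$, but not yet $\gamma_x(P)=P$.

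To fix this, apply the Schur--Zassenhaus theorem to $H=P\cdot H$ with kernel $P$. The quotient $H/P\cong\gamma(Q)$ is a $\pi$-group, so the orders are not coprime, and Schur--Zassenhaus does not apply directly. We therefore argue more cleverly, as follows. The subgroup $\gamma(Q)$ acts on the set $X$ of Hall $\pi$-subgroups of $(G,\cdot)$. Moreover $(G,\cdot)$ acts transitively on $X$ by conjugation, and $X\cong G/N_G(P)$. Since $H$ normalises $P$, we get $(G,\cdot)H\supseteq K$, and a Frattini-type argument shows that $N_K(P)$ meets $\gamma(Q)$ modulo inner automorphisms. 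The cleanest route, which we would try first, is to replace $\gamma(Q)$ by the full $\pi$-group $\gamma(Q)$ acting on $X$, and note that every $\gamma_x$ with $x\in Q$ maps $P$ into the $(G,\cdot)$-conjugacy class of $P$.

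In fact \textbf{this step can be skipped entirely.} The later argument only needs a $\pi$-subgroup of $(G,\cdot)\rtimes\gamma(G)$ of the form $H$ such that $H\cap G$ is Hall in $(G,\cdot)$. So we take $H$ itself as the replacement for $(P,\cdot)\rtimes\gamma(Q)$.

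\textbf{Step 2: conjugation in the holomorph.} Set $L=(G,\cdot)\rtimes\gamma(G)$, which is soluble. Let $H'$ be a Hall $\pi$-subgroup of $L$. As in Step 1, $P':=H'\cap G$ is a Hall $\pi$-subgroup of $(G,\cdot)$. The group $\lambda_\circ(Q)$ is a $\pi$-subgroup of $L$, so it lies in some conjugate $cH'c^{-1}$ of $H'$. Replacing $H'$ by $cH'c^{-1}$, we may assume $\lambda_\circ(Q)\subseteq H'$.

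Now evaluate at the identity element of $G$, which is a fixed point of $\gamma(G)$. This gives
\[ Q = \lambda_\circ(Q)[1] \subseteq H'[1]. \]
Write each element of $H'$ as $(u,\alpha)$. Then $H'[1]=\{u : (u,\alpha)\in H'\}$, which is the image of the projection of $H'$ to the first coordinate.

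\textbf{Step 3: size count (main obstacle).} We must show that $Q$ equals a subset that is simultaneously a Hall $\pi$-subgroup of $(G,\cdot)$. The set $H'[1]$ is the orbit of $1$ under $H'$, and its size is $[H':\mathrm{Stab}_{H'}(1)]$, which is a $\pi$-number. The stabiliser of $1$ in $H'$ is $H'\cap\gamma(G)$. Since $H'[1]$ is an orbit of $1$ and contains $Q$, and $|Q|$ is the full $\pi$-part of $|G|$, it follows that $H'[1]$ has size exactly $|Q|$, and hence $Q=H'[1]$.

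It remains to show that this orbit is a subgroup of $(G,\cdot)$. Its stabiliser contains $\lambda_\circ(Q)$, which acts regularly on $Q$. The expected key point is that $H'[1]$ is a union of cosets $uP'$, because $P'\subseteq H'$ acts by left multiplication and $\gamma$-parts normalise appropriately. This would give $|P'|$ dividing $|Q|$ with $P'$ of full $\pi$-order, so $H'[1]=uP'$. Since $1\in H'[1]$, this forces $H'[1]=P'$.

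Therefore $Q=P'$ is a Hall $\pi$-subgroup of both $(G,\cdot)$ and $(G,\circ)$, and hence it is a subskew brace of the required order. The main obstacle is this final step: showing that the orbit $H'[1]$ is exactly a left coset of $P'$. For this we would use that $H'=P'\rtimes(\text{complement})$, which holds by Hall theory (or by Schur--Zassenhaus when the orders are coprime), together with the computation $(u,\alpha)[1]=u$ and $(p,1)(u,\alpha)[1]=pu$.
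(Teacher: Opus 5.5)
\textbf{There is a genuine gap in Step~3, and it cannot be patched within your setup, because the conclusion you reach there is false.} Your argument would show that the Hall $\pi$-subgroup $Q$ of $(G,\circ)$ you started from is itself a subgroup of $(G,\cdot)$. That would mean every Hall $\pi$-subgroup of $(G,\circ)$ is a subskew brace, which fails in general.

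Take $(\Z_{6},+,\circ)$ with $i\circ j=i+(-1)^{i}j$ from Example~\ref{eg_Z_6_counterexamples}, with $\pi=\{2\}$ and $Q=\{0,1\}$. This $Q$ is a Sylow $2$-subgroup of $(\Z_{6},\circ)$, but it is not a subgroup of $(\Z_{6},+)$. Here $\gamma(G)=\{\pm1\}$ and $\lambda_{\circ}(1)=(1,-1)$. The Sylow $2$-subgroup of $\Z_{6}\rtimes\{\pm1\}$ containing $\lambda_{\circ}(Q)$ is $H'=\{(0,1),(3,1),(1,-1),(4,-1)\}$. Its orbit of $0$ is $\{0,1,3,4\}$, which is two cosets of $P'=\{0,3\}$, not one.

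The size count is where it breaks. The orbit length $[H':H'\cap\gamma(G)]$ is a $\pi$-number, but nothing bounds it by $|G|_{\pi}$; here it is $4>2$. Equivalently, a complement to $P'$ in $H'$ need not lie in the stabiliser $(1,\gamma(G))$ of the identity, and your ``key point'' silently needs exactly that.

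\textbf{What is missing is the Step~1 you discarded.} The fix there uses a fact you listed yourself: in a soluble group every $\pi$-subgroup lies in a Hall $\pi$-subgroup. Choose the Hall $\pi$-subgroup $H$ of $(G,\cdot)\rtimes\gamma(G)$ (or of your $K$) so that it \emph{contains} $(1,\gamma(Q))$. Define $P$ by $(P,1)=H\cap(G,1)$. Then $P$ is normalised by every $(1,\gamma_{x})$ with $x\in Q$, so $\gamma_{x}(P)=P$. Hence $H_{0}=(P,\cdot)\rtimes\gamma(Q)$ is a Hall $\pi$-subgroup whose orbit of $1$ is exactly $P$. No Schur--Zassenhaus argument is needed.

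Now write $\lambda_{\circ}(Q)\subseteq cH_{0}c^{-1}$ with $c=(g,\gamma_{h})$. Evaluate $\lambda_{\circ}(Q)c\subseteq cH_{0}$ at $1$, rather than evaluating $\lambda_{\circ}(Q)\subseteq cH_{0}c^{-1}$ at $1$. This gives $Q\circ g\subseteq g\gamma_{h}(P)$, which is an equality by counting. Left $\circ$-multiplication by $\bar{g}$ then gives $\bar{g}\circ Q\circ g=\gamma_{\bar{g}\circ h}(P)$.

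So the subskew brace is a $\circ$-conjugate of $Q$, not $Q$ itself. In the example one can take $g=2$, and then $\bar{g}\circ Q\circ g=\{0,3\}$.
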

\begin{proof}
Let $ (Q,\circ) $ be a Hall $ \pi $-subgroup of $ (G,\circ) $. By analogy with the proof of Theorem \ref{thm_Sylow}, we seek a Hall $ \pi $-subgroup $ (P,\cdot) $ of $ (G,\cdot) $ such that $ \gamma_{x}(P)=P $ for all $ x \in Q $.

To do this, we consider once again the subgroup $ (G,\cdot) \rtimes \gamma(G) $ of $ \Hol(G,\cdot) $. Since $ (G,\cdot) $ and $ (G,\circ) $ are soluble, this is also soluble. Since $ (Q,\circ) $ is a Hall $ \pi $-subgroup of $ (G,\circ) $ its image $ \gamma(Q) $ is a Hall $ \pi $-subgroup of $ \gamma(G) $, and so $ (1,\gamma(Q)) $ is a $ \pi $-subgroup of $ (G,\cdot) \rtimes \gamma(G) $. Let $ H $ be a Hall $ \pi $-subgroup of $ (G,\cdot) \rtimes \gamma(G) $ that contains $ (1,\gamma(Q)) $, and let $ (P,\cdot) $ be the subgroup of $ (G,\cdot) $ such that $ (P,1) = (G,1) \cap H $. Since $ (G,1) $ is normal in $ (G,\cdot) \rtimes \gamma(G) $ we find that $ (P,\cdot) $ is a Hall $ \pi $-subgroup of $ (G,\cdot) $. Now for $ x \in Q $ we have
\[ (1,\gamma_{x})(P,1)(1,\gamma_{x})^{-1} = (\gamma_{x}(P),1). \]
Obviously we have $ (\gamma_{x}(P),1) \subseteq (G,1) $; in addition, since $ (1,\gamma(Q)) \subseteq H $ we have $ (\gamma_{x}(P),1) \subseteq H $. Hence $ (\gamma_{x}(P),1)) \subseteq (G,1) \cap H = (P,1) $, and so $ \gamma_{x}(P) = P $ for all $ x \in Q $. 

Now we may essentially follow the second half of the proof of Theorem \ref{thm_Sylow}. 

We may construct the subgroup $ (P,\cdot) \rtimes \gamma(Q) $, which is a Hall $ \pi $-subgroup of $ (G,\cdot) \rtimes \gamma(G) $; since $ (G,\cdot) \rtimes \gamma(G) $  is soluble, every Hall $ \pi $-subgroup of $ (G,\cdot) \rtimes \gamma(G) $ is conjugate to $ (P,\cdot) \rtimes \gamma(Q) $. 

The subgroup $ \lambda_{\circ}(Q) $ is a $ \pi $-subgroup of $ (G,\cdot) \rtimes \gamma(G) $, so is contained in some conjugate of $ (P,\cdot) \rtimes \gamma(Q) $. As before, we obtain
\[ \lambda_{\circ}(Q) (g,\gamma_{h}) \subseteq (g,\gamma_{h})\left( (P,\cdot) \rtimes \gamma(Q) \right) \] 
for some $ g,h \in G $. Evaluating both sides at the identity element of $ G $ yields 
\[ Q \circ g \subseteq g \gamma_{h}(P), \]
and we find that 
\[ \bar{g} \circ Q \circ g  = \gamma_{\bar{g} \circ h}(P). \]
This set is then a Hall $ \pi $-subgroup with respect to both operations simultaneously. That is: a subskew brace whose order is a product of primes in $ \pi $ and whose index is not divisible by any of the primes in $ \pi $. 
\end{proof}

\section{An application: skew braces of order $ pq $ revisited}

The classical Sylow, Cauchy, and Hall theorems are fundamental tools for analysing and classifying finite groups, and we expect their skew brace analogues to be similarly useful in the study of finite skew braces. As a first example, we consider the question of classifying skew braces of order $ pq $, where $ p > q $ are prime numbers. This classification is due to Acri and Bonatto \cite{AcB20}, employing results of Byott \cite{By04c} concerning regular subgroups of holomorphs of groups of order $ pq $. We will show how the results of Section \ref{sec_main} can be used to simplify this classification. 
\\ \\
If $ p \not \equiv 1 \pmod{q} $ then up to isomorphism there is a unique skew brace of order $ pq $, which is the trivial skew brace on the cyclic group of order $ pq $ \cite[Theorem A.8]{SV18}. 

For the remainder of this section we shall study skew braces $ G=(G,\cdot,\circ) $ of order $ pq $ where $ p \equiv 1 \pmod{q} $. Up to isomorphism there are two groups of order $ pq $: one cyclic, the other metacyclic. Since each of these has a unique subgroup of order $ p $, the skew brace $ G $  contains an ideal $ P $ of order $ p $; this is trivial as a skew brace, and we may write $ P = \langle s \rangle $ unambiguously. By Theorem \ref{cor_Cauchy} the skew brace $ G $ also contains a subskew brace of order $ q $, say $ Q = \langle t \rangle $. Then $ (G,\cdot) $ and $ (G,\circ) $ are both generated by $ s $ and $ t $; we shall write

\begin{equation} \label{eqn_additive_presentaion}
(G,\cdot) = \langle s,t \mid s^{p} = t^{q} = 1, \; tst^{-1} = s^{g} \rangle
\end{equation}
where either $ g=1 $ (corresponding to $ (G,\cdot) $ cyclic) or $ g $ is a fixed element of $ \Z_{p}^{\times} $ of order $ q $ (corresponding to $ (G,\cdot) $ metacyclic). 

An important division in our classification is between those skew braces in which $ Q $ is a left ideal and those in which it is not. Since $ \Aut(P) $ is abelian, a well known construction (\cite[Proposition 4.6.12]{NZ18} or \cite[Proposition 1.1]{AcB20}) implies that for each $ d \in \Z_{p}^{\times} $ such that $ d^{q}=1 $ the binary operation defined by 
\begin{equation} \label{eqn_sdp_skew_brace}
s^{i}t^{u} \circ_{d} s^{j}t^{v} = s^{i+d^{u}j}t^{u+v} 
\end{equation}
yields a skew brace $ (G,\cdot,\circ_{d}) $. Since $ s \circ_{d} t = st $, the subskew brace $ Q $ is a left ideal of $ (G,\cdot,\circ_{d}) $. The converse is also true: if  $ (G,\cdot,\circ) $ is a skew brace in which $ Q $ is a left ideal then $ (G,\cdot,\circ) $ is isomorphic to $ (G,\cdot,\circ_{d}) $ for some $ d $ \cite[Proposition 3.3]{Tr26}. 

This construction is sufficient to classify skew braces $ G=(G,\cdot,\circ) $ with $ (G,\cdot) $ cyclic of order $ pq $. This result corresponds with \cite[Main Theorem, first bullet point]{AcB20}

\begin{proposition} \label{prop_classify_pq_cyclic}
Suppose that $ p,q $ are primes with $ p \equiv 1 \pmod{q} $. Then up to isomorphism there are two skew braces $ G=(G,\cdot,\circ) $ with $ (G,\cdot) $ cyclic of order $ pq $. 
\end{proposition}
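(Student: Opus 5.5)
The plan is to show that if $ (G,\cdot) $ is cyclic, then $ Q $ is automatically a left ideal. By \cite[Proposition 3.3]{Tr26} this makes $ G $ isomorphic to some $ (G,\cdot,\circ_{d}) $, and it then remains to decide which values of $ d $ give non-isomorphic skew braces.

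\textbf{Step 1: $ Q $ is a left ideal.} Since $ (G,\cdot) $ is cyclic with $ g=1 $, it has a unique subgroup of order $ q $, namely $ \langle t \rangle $. Each $ \gamma_{x} $ is an automorphism of $ (G,\cdot) $, so it preserves every characteristic subgroup. In particular $ \gamma_{x}(Q)=Q $ for all $ x \in G $. As $ Q $ is already a subskew brace by Corollary \ref{cor_Cauchy}, it is a left ideal.

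\textbf{Step 2: reduce to the operations $ \circ_{d} $.} By \cite[Proposition 3.3]{Tr26}, $ G \cong (G,\cdot,\circ_{d}) $ for some $ d \in \Z_{p}^{\times} $ with $ d^{q}=1 $. There are $ q $ such $ d $, namely $ d=1 $ and the $ q-1 $ elements of order $ q $. When $ d=1 $ we get the trivial brace, with $ (G,\circ) $ cyclic. When $ d \neq 1 $, formula \eqref{eqn_sdp_skew_brace} shows $ t \circ_{d} s \circ_{d} \bar{t} = s^{d} $, so $ (G,\circ_{d}) $ is metacyclic. Hence $ d=1 $ is not isomorphic to any $ d \neq 1 $, and at least two classes occur.

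\textbf{Step 3: all $ d \neq 1 $ are isomorphic.} This is the main step. For $ d,d' $ of order $ q $, write $ d'=d^{k} $ with $ k \in \Z_{q}^{\times} $. I would take the automorphism $ \phi $ of the cyclic group $ (G,\cdot) $ given by $ \phi(s)=s $ and $ \phi(t)=t^{k'} $, where $ kk' \equiv 1 \pmod q $, extended to $ \phi(s^{i}t^{u}) = s^{i}t^{k'u} $. The check is
\[ \phi(s^{i}t^{u} \circ_{d'} s^{j}t^{v}) = s^{i+d^{ku}j}t^{k'(u+v)}, \qquad \phi(s^{i}t^{u}) \circ_{d} \phi(s^{j}t^{v}) = s^{i+d^{k'u}j}t^{k'(u+v)}. \]
These do not match, so the exponent must be inverted: I would instead take $ \phi(t)=t^{k} $. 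Then $ \phi(x)\circ_{d}\phi(y) = s^{i+d^{ku}j}t^{k(u+v)} $, which equals $ \phi(x\circ_{d'}y) $, and $ \phi $ is an isomorphism $ (G,\cdot,\circ_{d'}) \to (G,\cdot,\circ_{d}) $. The only real care needed is this exponent bookkeeping, together with the fact that $ t \mapsto t^{k} $ is additive because $ (G,\cdot) $ is abelian. This gives exactly two isomorphism classes.
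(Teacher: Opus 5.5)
Your proof is correct and follows the paper's argument. In both, $Q$ is a left ideal, so $G \cong (G,\cdot,\circ_{d})$ by \cite[Proposition 3.3]{Tr26}, and the braces with $d \neq 1$ are identified via the automorphisms $s \mapsto s$, $t \mapsto t^{k}$ of $(G,\cdot)$; you also supply the justification the paper leaves implicit, namely that $Q$ is the unique, hence characteristic, subgroup of order $q$. Your corrected check in Step 3 is right: $\phi(t)=t^{k}$ with $d'=d^{k}$ carries $\circ_{d'}$ to $\circ_{d}$, so just delete the abandoned first attempt with $t^{k'}$ from the write-up.
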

\begin{proof}
Suppose that $ (G,\cdot,\circ) $ is a skew brace with $ (G,\cdot) $ cyclic of order $ pq $. Then the subskew brace $ Q $ is a left ideal of $ G $, and so $ G \cong (G,\cdot,\circ_{d}) $ for some $ d $. If $ d=1 $ then we obtain the trivial skew brace on $ (G,\cdot) $. For each $ d \neq 1 $ the operation $ \circ_{d} $ gives a skew brace in which $ (G,\circ_{d}) $ is metacyclic; we find that these skew braces are mutually isomorphic via the automorphisms of $ (G,\cdot) $ defined by $ s \mapsto s, \; t \mapsto t^{u} $ for $ 1 \leq u \leq q-1 $. Hence there are precisely to isomorphically distinct skew braces in this case. 
\end{proof}

Now we turn to skew braces $ G=(G,\cdot,\circ) $ with $ (G,\cdot) $ metacyclic of order $ pq $ (hence we take $ g $ to be a fixed element of order $ q $ in $ \Z_{p}^{\times} $ in \eqref{eqn_additive_presentaion}). In this case we shall combine the construction above with the concept of the \textit{opposite} of a skew brace $ G=(G,\cdot,\circ) $ \cite{KT20}: this is the skew brace $ \hat{G} = (G,\hat{\cdot},\circ) $, where $ (G,\hat{\cdot}) $ is simply the opposite group to $ (G,\cdot) $. If $ \gamma $ denotes the $ \gamma $-function of $ G $ then the $ \gamma $-function of $ \hat{G} $ is given by $ x\gamma_{x}(y)x^{-1} $. If $ G' $ is a further skew brace and $ G \cong G' $ then we also have $ \hat{G} \cong \hat{G'} $. 

We note that the proof of the following key result depends heavily on the facts that $ \gamma_{s}(s)=s $ and $ \gamma_{t}(t)=t $, which follow from the fact that $ G $ is guaranteed to have an ideal $ P $ of order $ p $ and a subskew brace $ Q $ of order $ q $. 

\begin{proposition} \label{prop_ker_gamma}
Suppose that $ p,q $ are primes with $ p \equiv 1 \pmod{q} $ and that $ G = (G,\cdot,\circ) $ is a skew brace with $ (G,\cdot) $ metacyclic of order $ pq $. Then exactly one of the inclusions $ P \subseteq \ker(\gamma) $ or $ P \subseteq \ker(\hat{\gamma}) $ holds. 
\end{proposition}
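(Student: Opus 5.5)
The plan is to reduce both inclusions to a statement about the single automorphism $\gamma_{s}$, pin it down with one parameter, and then use the fact that $\gamma$ is a homomorphism on $(G,\circ)$ together with the relation between $s$ and $t$ in $(G,\circ)$.

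\textbf{Reduction to one parameter.} Since $P$ is a subgroup of $(G,\circ)$ generated by $s$, and both $\gamma$ and $\hat{\gamma}$ are homomorphisms from $(G,\circ)$ (the latter being the $\gamma$-function of the opposite skew brace $\hat{G}$), we have $P \subseteq \ker(\gamma)$ if and only if $\gamma_{s}=\mathrm{id}$, and $P \subseteq \ker(\hat{\gamma})$ if and only if $\hat{\gamma}_{s}=\mathrm{id}$.

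Because $P$ is an ideal which is trivial as a skew brace, $s \circ s = s^{2}$. Hence $\gamma_{s}(s)=s$, and $\circ$-powers of $s$ agree with $\cdot$-powers. In particular $\gamma_{s^{k}}=\gamma_{s}^{k}$ for every $k$.

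Every automorphism of the metacyclic group $(G,\cdot)$ has the form $s\mapsto s^{a}$, $t\mapsto s^{b}t$. Since $\gamma_{s}(s)=s$, we may write
\[ \gamma_{s}: s\mapsto s,\quad t\mapsto s^{b}t \qquad\text{for some } b\in\Z_{p}. \]
Using $\hat{\gamma}_{x}(y)=x\gamma_{x}(y)x^{-1}$ and $ts^{-1}=s^{-g}t$, we compute $\hat{\gamma}_{s}(s)=s$ and
\[ \hat{\gamma}_{s}(t)=s\,s^{b}t\,s^{-1}=s^{1+b-g}t. \]
Thus $\gamma_{s}=\mathrm{id}$ if and only if $b=0$, and $\hat{\gamma}_{s}=\mathrm{id}$ if and only if $b=g-1$. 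Since $g\neq 1$, at most one of the two inclusions can hold. It remains to show that $b\in\{0,g-1\}$.

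\textbf{Exploiting $\gamma_{t}$.} Since $Q$ is a subskew brace of order $q$, we have $t\circ t = t\gamma_{t}(t)$, and this is the $\circ$-square of $t$, which lies in $Q$. From this I expect to get $\gamma_{t}(t)=t$, as the paper's remark indicates. Since $P$ is a left ideal, $\gamma_{t}(s)=s^{c}$ for some $c\in\Z_{p}^{\times}$. So $\gamma_{t}$ is the automorphism $s\mapsto s^{c}$, $t\mapsto t$.

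Because $P$ is normal in $(G,\circ)$, there is some $k$ with $t\circ s\circ\bar{t}=s^{k}$, and hence $t\circ s = s^{k}\circ t$. Expanding both sides with \eqref{eqn_gamma_translates}:
\[ t\circ s = t s^{c} = s^{gc}t, \qquad s^{k}\circ t = s^{k}\gamma_{s}^{k}(t)=s^{k(1+b)}t. \]
Comparing gives $gc = k(1+b)$ in $\Z_{p}$.

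Next apply $\gamma$ to the same relation, so $\gamma_{t}\gamma_{s}\gamma_{t}^{-1}=\gamma_{s}^{k}$, and evaluate both sides at $t$. The left side gives $\gamma_{t}(s^{b}t)=s^{cb}t$, and the right side gives $s^{kb}t$. Hence $cb=kb$.

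\textbf{Conclusion and main obstacle.} If $b\neq 0$, then $c=k$. Substituting into $gc=k(1+b)$ and cancelling $c\neq0$ gives $g=1+b$, that is, $b=g-1$. So in every case $b\in\{0,g-1\}$, which is exactly one of the two alternatives.

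The main point needing care is justifying $\gamma_{t}(t)=t$, i.e.\ that the $\circ$-square of $t$ is $t^{2}$. The fallback plan is to use that $(Q,\circ)$ and $(Q,\cdot)$ are both cyclic of order $q$, and that $\gamma_{t}$ restricted to $Q$ is an automorphism of $(Q,\cdot)$ of order dividing $q$, while $|\Aut(\Z_{q})|=q-1$ is coprime to $q$. Hence $\gamma_{t}|_{Q}=\mathrm{id}$.
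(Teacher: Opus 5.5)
Your proof is correct and is essentially the paper's argument. Both set $\gamma_{s}(t)=s^{b}t$ and then use the relation $t\circ s\circ\bar{t}=s^{k}$ twice, once through the homomorphism $\gamma$ evaluated at $t$ (using $\gamma_{t}(t)=t$) and once by expanding with $x\circ y=x\gamma_{x}(y)$, to force $b\in\{0,g-1\}$, with $g\neq 1$ giving exclusivity. Your fallback justification of $\gamma_{t}(t)=t$ is valid, since an automorphism of $(Q,\cdot)$ of order dividing $q$ is trivial because $\gcd(q,q-1)=1$; it supplies a detail the paper only asserts.
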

\begin{proof}
First suppose that $ P \subseteq \ker(\gamma) $. Then in particular $ \gamma_{s}(t) = t $, and so $ \hat{\gamma}_{s}(t) = sts^{-1} = s^{1-g}t $. Hence $ P \not \subseteq \ker(\hat{\gamma}) $. 

Conversely, suppose that $ P \not \subseteq \ker(\gamma) $. Since $ P $ is a trivial skew brace we certainly have $ \gamma_{s}(s)=s $, so we must have $ \gamma_{s}(t) = s^{a}t $ for some nonzero $ a \in \Z_{p} $. We claim that $ a=g-1 $, so that $ \gamma_{s}(t) = s^{-1}ts $. 

To establish the claim, write $ t \circ s \circ t^{-1} = s^{d} $ with $ d^{q} = 1 $ in $ \Z_{p}^{\times} $ (possibly $ d=1 $). Then $ \gamma_{t}\gamma_{s}\gamma_{t}^{-1} = \gamma_{s}^{d} $, so 
\begin{align*}
& \gamma_{t}\gamma_{s}\gamma_{t}^{-1}(t) = \gamma_{s}^{d}(t) \\
\Rightarrow \; & \gamma_{t}\gamma_{s}(t) = \gamma_{s}^{d}(t) \tag{$\gamma_{t}(t)=t$} \\
\Rightarrow \; & \gamma_{t}(s^{a}t) = s^{da}t \\
\Rightarrow \; & \gamma_{t}(s)=s^{d} \tag{$ a $ is nonzero in $ \Z_{p} $}
\end{align*}
Now consider again the relation $ t \circ s = s^{d} \circ t $. We have
\[ t \circ s = t \gamma_{t}(s) = ts^{d} = s^{dg}t, \]
and
\[ s^{d} \circ t = s^{d} \gamma_{s}^{d}(t) = s^{d(a+1)}t. \]
Hence $ a=g-1 $, and so $ \gamma_{s}(t) = s^{-1}ts $, as claimed. Hence $ \hat{\gamma}_{s}(t) = s\gamma_{s}(t)s^{-1} = t $. Since we certainly have $ \hat{\gamma}_{s}(s) = s $, we conclude that $ P \subseteq \ker(\hat{\gamma}) $.  
\end{proof}

Using this result we now classify the skew braces $ G $ with $ (G,\cdot) $ metacyclic of order $ pq $. These are the skew braces described in \cite[Main Theorem, second bullet point]{AcB20}

\begin{proposition} \label{prop_classify_pq_metacyclic}
Suppose that $ p,q $ are primes with $ p \equiv 1 \pmod{q} $. Then up to isomorphism there are $ 2q $ skew braces $ G=(G,\cdot,\circ) $ with $ (G,\cdot) $ metacyclic of order $ pq $. 
\end{proposition}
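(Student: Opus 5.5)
The plan is to split according to Proposition \ref{prop_ker_gamma} into the case $P \subseteq \ker(\gamma)$ and the case $P \subseteq \ker(\hat{\gamma})$. Since taking opposites is an involution compatible with isomorphism, and $\hat{G}$ has the same additive group up to the isomorphism $x \mapsto x^{-1}$ (so $(G,\hat{\cdot})$ is again metacyclic of order $pq$), it suffices to count isomorphism classes with $P \subseteq \ker(\gamma)$. Each such class then gives a class with $P \subseteq \ker(\hat{\gamma})$ by passing to the opposite. Because exactly one inclusion holds, no skew brace lies in both families. The total is therefore twice the count for the first family, and I aim to show that this count is $q$.

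In the case $P \subseteq \ker(\gamma)$, I want to show that $Q$ is a left ideal. We already have $\gamma_{t}(t)=t$, since $Q$ is a subskew brace, and $\gamma_{s}(t)=t$, since $s \in \ker(\gamma)$. Because $(G,\circ)$ is generated by $s$ and $t$ and $\gamma$ is a homomorphism on $(G,\circ)$, every $\gamma_{x}$ fixes $t$. Hence $\gamma_{x}(Q)=Q$ for all $x$, so $Q$ is a left ideal. By \cite[Proposition 3.3]{Tr26}, $G \cong (G,\cdot,\circ_{d})$ for some $d \in \Z_{p}^{\times}$ with $d^{q}=1$, which gives $q$ candidate values of $d$. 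Conversely, each $\circ_{d}$ has $\gamma_{s}(t)=t$, because $s \circ_{d} t = st$ gives $\gamma_{s}=$ identity on $t$. So all $q$ candidates lie in this family.

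The main obstacle is showing that the $q$ skew braces $(G,\cdot,\circ_{d})$ are pairwise non-isomorphic; here the behaviour differs from the cyclic case. I would try to find an isomorphism invariant that recovers $d$. An isomorphism $\phi$ is an automorphism of the metacyclic group $(G,\cdot)$, so $\phi(s)=s^{k}$ and $\phi(t)=s^{b}t$. The exponent of $t$ cannot change, because the relation $tst^{-1}=s^{g}$ pins $t$ modulo $P$. One should then check that $\phi(t) \circ_{d'} \phi(s) \circ_{d'} \overline{\phi(t)} = \phi(s)^{d'}$, whereas $\phi$ maps $t \circ_{d} s \circ_{d} \bar{t} = s^{d}$ to $s^{kd}$; this forces $d=d'$. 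More intrinsically, $d$ is the scalar by which $t$ acts on the ideal $P$ by $\circ$-conjugation. Since an additive automorphism fixes the coset $tP$, and $P$ is abelian so that conjugation by elements of $P$ is trivial, this scalar is an invariant.

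Finally, I would confirm that the $\gamma_{s}$ condition is preserved under isomorphism, since $P$ is characteristic. Combining this with the opposite-brace correspondence gives $q + q = 2q$ skew braces.
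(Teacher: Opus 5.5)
Your argument is correct and shares the paper's skeleton: split via Proposition \ref{prop_ker_gamma}, reduce to one family by taking opposites, and identify that family with the braces $(G,\cdot,\circ_{d})$ via the left-ideal classification. The step showing that the $q$ braces $(G,\cdot,\circ_{d})$ are pairwise non-isomorphic is, however, genuinely different.

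\emph{The paper's route.} It separates $d=1$ from $d\neq 1$ by the isomorphism type of $(G,\circ_{d})$. For $d\neq 1$ it asserts that every automorphism of $(G,\cdot)$ is an automorphism of $(G,\cdot,\circ_{d})$.

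\emph{Your route.} You use the scalar by which $t$ acts on $P$ under $\circ$-conjugation as an isomorphism invariant. This treats all $d$ uniformly, and it rests mainly on the fact that every automorphism of $(G,\cdot)$ maps $t$ into the coset $tP$.

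\emph{Why yours is the sounder one.} The paper's assertion holds only for $d=g$, where $\circ_{g}$ coincides with $\cdot$. For example, $\phi\colon s\mapsto s,\ t\mapsto st$ is an automorphism of $(G,\cdot)$. Yet $\phi(t\circ_{d} t)=\phi(t^{2})=s^{1+g}t^{2}$, while $\phi(t)\circ_{d}\phi(t)=s^{1+d}t^{2}$. So for $q\geq 3$ and $d\notin\{1,g\}$ the claim fails. The conclusion it is meant to support is nevertheless true, as your invariant shows.

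You also make explicit two points the paper leaves implicit. First, $Q$ is a left ideal when $P\subseteq\ker(\gamma)$, because $\gamma_{s}$ and $\gamma_{t}$ fix $t$ and hence every $\gamma_{x}$ does. Second, membership in each of the two families is an isomorphism invariant, because $P$ is characteristic.
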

\begin{proof}
Suppose that $ (G,\cdot,\circ) $ is a skew brace of order $ pq $ with $ (G,\cdot) $ metacyclic. By Proposition \ref{prop_ker_gamma} we have $ P \subseteq \ker(\gamma) $ or $ P \subseteq \ker(\hat{\gamma}) $; no skew brace of the first kind can be isomorphic to any skew brace of the second kind, so it is sufficient to focus on the case in which $ P \subseteq \ker(\gamma) $. In this case the subskew brace $ Q $ is a left ideal of $ G $, and so $ G \cong (G,\cdot,\circ_{d}) $ for some $ d $. 

If $ d=1 $ then we obtain a skew brace in which $ (G,\circ) $ is cyclic. Taking the opposite of this skew brace yields one further isomorphically distinct skew brace in which $ (G,\circ) $ is cyclic.

For each $ d \neq 1 $ the operation $ \circ_{d} $ gives a skew brace in which $ (G,\circ_{d}) $ is metacyclic; we find that each automorphism of $ (G,\cdot) $ extends to an automorphism of $ (G,\cdot,\circ_{d}) $, so the skew braces obtained in this way are mutually nonisomorphic. Hence we obtain $ 2(q-1) $ isomorphically distinct skew braces in which $ (G,\circ) $ is metacyclic.

In total, we find that up to isomorphism there are $ 2q $ skew braces $ G=(G,\cdot,\circ) $ with $ (G,\cdot) $ metacyclic. 
\end{proof}

Combining Propositions \ref{prop_classify_pq_cyclic} and \ref{prop_classify_pq_metacyclic} we recover \cite[Main Theorem]{AcB20}: if $ p,q $ are primes with $ p \equiv 1 \pmod{q} $ then there are $ 2q+2 $ isomorphically distinct skew braces of order $ pq $.

\bibliography{Omahabib}
\bibliographystyle{plain}

\end{document}